\def\plus{{\boldsymbol{\dag}}}
\newcommand{\R}{\mathbb R}
\newcommand\norm[1]{\Vert#1\Vert}
\newcommand\set[1]{{\{#1\}}}
\newcommand{\edot}{\,\cdot\,}
\DeclareMathOperator{\ran}{ran}
\DeclareMathOperator{\id}{Id}
\DeclareMathOperator*{\argmin}{arg\,min}
\newcommand{\eps}{\epsilon}
\newtheorem{theorem}{Theorem}
\newtheorem{proposition}[theorem]{Proposition}
\newtheorem{example}[theorem]{Example}
\newtheorem{remark}[theorem]{Remark}
\theoremstyle{definition}
\newtheorem{definition}[theorem]{Definition}
  \newenvironment{tritemize}
  {\begin{itemize}[label=\textcolor{green!30!gray}{\raisebox{0.2ex}{$\blacktriangleright$}}]}
  {\end{itemize}}
\newcommand{\N}{\mathcal{N}}
\title{Data-Consistent Learning of Inverse Problems}
\author[1]{Markus Haltmeier}
\author[2]{Gyeongha Hwang}
\affil[1]{Department of Mathematics, University of Innsbruck, Austria.}
\affil[2]{Department of Mathematics, Yeungnam University, Gyeongsan, Korea.}
\date{} 
\begin{document}
\maketitle

\begin{abstract}
Inverse problems are inherently ill-posed, suffering from non-uniqueness and instability. Classical regularization methods provide mathematically well-founded solutions, ensuring stability and convergence, but often at the cost of reduced flexibility or visual quality. Learned reconstruction methods, such as convolutional neural networks, can produce visually compelling results, yet they typically lack rigorous theoretical guarantees. DC (DC) networks address this gap by enforcing the measurement model within the network architecture. In particular, null-space networks combined with a classical regularization method as an initial reconstruction define a convergent regularization method. This approach preserves the theoretical reliability of classical schemes while leveraging the expressive power of data-driven learning, yielding reconstructions that are both accurate and visually appealing.

\medskip\noindent\textbf{Keywords.}
Inverse Problems;
Regularization;
Nullspace networks;
Data-consistency;
Convergence analysis;
Learned reconstruction.
\end{abstract}

\section{Introduction}
\label{sec:DC-intro}

We consider the problem of solving linear inverse problems of the form
\begin{equation} \label{eq:ip}
    y = A x_0 + z,
\end{equation}
where \(A \colon X \to Y\) is a known linear operator between Banach spaces \(X\) and \(Y\). Here, \(x_0 \in X\) denotes the unknown signal, \(y \in Y\) the measured data, and \(z \in Y\) represents additive measurement noise with bound $\| z \| \leq \delta$. The objective is to recover a stable and accurate estimate of \(x\) from the noisy observations \(y\). Since \(A\) may be ill-conditioned or non-invertible, the problem is generally ill-posed: direct inversion is either unstable or non-unique. To address this, it is essential to incorporate additional prior information \cite{morozov2012methods, scherzer2009variational,tikhonov1977solutions}.

\paragraph*{Regularization methods:}

Classical approaches to inverse problems rely on regularization, which incorporates prior information by penalizing undesirable properties of the solution. Common examples include Tikhonov regularization, spectral filtering, and iterative schemes \cite{engl1996regularization,ito2014inverse, scherzer2009variational,schuster2012regularization}. For instance, Tikhonov regularization replaces unstable inversion with a well-posed minimization problem of the form 
\begin{equation} \label{eq:tik}
\min_{x \in X} \left\{ \|A x - y\|^p + \alpha \|x\|^q \right\},
\end{equation}
where \(p, q > 1\) are constants, and \(\alpha > 0\) is the regularization parameter that balances data fidelity and stability. Such methods enforce DC up to the noise level while suppressing noise amplification and model instability.

A key property of these classical methods is that, under a suitable parameter choice $\alpha = \alpha(\delta)$, they are both DC and convergent: as the noise level $\delta \to 0$,  for any solution $x_\alpha$ of \eqref{eq:tik}, the residual $\|A x_\alpha - y\|$ remains within the noise bound, while the reconstruction $x_\alpha$ approaches the true signal $x_0$. However, these methods often rely on handcrafted regularization functionals or filter functions, which may fail to capture the complex structures present in high-dimensional signals, such as images.

\paragraph*{Learned reconstruction:}

In recent years, data-driven neural networks have emerged as powerful alternatives to classical regularization. These models learn effective priors directly from training data, often producing visually superior reconstructions compared to classical schemes \cite{adler2017solving,lucas2018using,arridge2019solving,mccann2017convolutional,wang2020deep,jin2017deep,li2018nett,han2016deep,gilton2021deep,kirisits2025regularization}. 

Roughly, these methods can be grouped into explicit and implicit approaches. In an explicit approach \cite{han2016deep,jin2017deep}, a known network \((\mathcal{B}_\theta)_{\theta \in \Theta}\) with \(\mathcal{B}_\theta \colon Y \to X\) is trained to map data \(y = A x_0 + z\) to the corresponding signal \(x_0\) for paired training data \((x_0, y)\). In an implicit approach \cite{li2018nett},  the reconstruction \(\hat{x}\) is defined implicitly via the network, for example through a fixed-point equation arising from an optimization problem. Specifically, in the context of learned regularization functionals, one minimizes  \(\|A x - y\|^2 + \alpha \, \Omega_\theta(x)\), where \(\Omega_\theta\) is a regularizer trained on data. Once the regularizer is learned, the framework largely aligns with classical variational regularization theory \cite{scherzer2009variational}. A broad overview of learned regularization functionals and training strategies can be found in \cite{hertrich2025learning}.

In this chapter, we focus on the explicit learning approach for solving inverse problems. In this context, three central challenges remain:
\begin{enumerate}[label=(\alph*)]
    \item \emph{Data-consistency (DC):} \label{E:dc}
    How to ensure consistency with measurements.
    \item \emph{Convergence:} \label{E:convergence}
    When do learned reconstructions yield a convergent regularization.
    \item \emph{Convergence rates:} \label{E:rates} 
    How to obtain quantitative estimates between the ground truth signal and learned reconstructions.
\end{enumerate}

To address these questions, we restrict our attention to two-step methods \(g_{\theta, \delta} = f_\theta  \circ \mathcal{B}_\delta\), where \(f_\theta \colon X \to X\) is an image-space network and \(\mathcal{B}_\delta \colon Y \to X\) is an initial reconstruction map. While deep networks can model complex image and noise statistics, they frequently lack explicit mechanisms to guarantee near DC,
\begin{equation} \label{eq:DC}
    \|A (g_{\theta, \delta}(y)) - y\| \leq \tau \cdot \delta \qquad \text{as } \delta \to 0 .
\end{equation}
As a result, they may introduce hallucinated or DC features, particularly in underdetermined settings \cite{bhadra2021hallucinations}.   

Informally, we call \emph{hallucination} the addition of reconstruction features $h \neq 0$ such that either
(i) they are not supported by the measurements, with $A(x+h)$ outside the measurement-uncertainty set, or
(ii) they are consistent with the data, with $A(x+h)$ within the measurement-uncertainty set, yet not warranted by the measurements alone.
Consistent hallucinations are, to some extent, unavoidable: adjusting DC components is inherent to any reconstruction method.
This adjustment is guided by signal and noise statistics, and what counts as proper DC artifact removal for one image class may be perceived as hallucination for another.
In contrast, hallucinations not supported by the measurements are unacceptable, as they contradict the only information provided by the data.

As we argue below, near data consistency of $g_{\theta, \delta}$ is key to establishing both convergence and convergence rates. We realize this via the null-space approach: the deterministic component $\mathcal{B}_\delta$ is near DC, and the learned component $f_\theta$ acts only on $\ker(A)$.

 \paragraph*{Scope of this chapter:}

DC refers to the requirement that a reconstruction \(g_{\theta, \delta}(y)\) reproduces the measured data through the forward operator \(A\) within the noise bound. While classical regularization methods naturally yield approximate DC, most learning-based architectures do not, which limits their theoretical interpretability. 

This chapter addresses the  gap between classical and learning-based reconstruction methods by formulating DC neural architectures within the framework of regularization theory. We focus in particular on the concept of \emph{null-space networks} introduced in \cite{schwab2019deep}, which modify initial reconstructions only within the null space of \(A\), thereby preserving DC by design. Building on this idea, we discuss how deep learning models can be interpreted as convergent regularization methods and thus retains the reliability of classical approaches while achieving superior empirical performance. The focus of the chapter is on theoretical aspects, including convergence and convergence rates. The key message is that DC learning offers the same theoretical guarantees as classical handcrafted regularization methods for solving inverse problems.

\section{Regularization background}
\label{sec:DC-inverse}

Let $A$ be a bounded linear operator between two Banach spaces $X$, $Y$.

\subsection{Ill-posedness}

The defining characteristic of inverse problems \eqref{eq:ip} is their \emph{ill-posedness}. Even in the case of noise-free data, the equation \(y = A x\) may lack a unique or existing solution and, in the presence of noise, additionally suffers from instability. Formally, an inverse problem violates at least one of the following properties:
\begin{enumerate}[label=(I\arabic*)]
    \item \label{ip1} \emph{Non-uniqueness:} There exist \(x_1 \neq x_2 \in X\) such that \(A x_1 = A x_2\).
    \item \label{ip2} \emph{Non-existence:} For some \(y \in Y\), there exists no \(x \in X\) satisfying \(A x = y\).
    \item \label{ip3} \emph{Instability:} Small differences in data \(\|A x_1 - A x_2\|\) do not imply small differences in the corresponding solutions \(\|x_1 - x_2\|\).
\end{enumerate}

If any of these conditions hold, the operator \(A\) fails to possess a continuous inverse, and direct inversion of \eqref{eq:ip} becomes infeasible. To obtain stable reconstructions, one therefore resorts to \emph{regularization methods}.

Regularization typically proceeds in two conceptual steps. First, to address non-uniqueness and non-existence, one restricts the domain and range of the forward operator to subsets \(\mathcal{D} \subseteq X\) and \(\mathcal{R} \subseteq Y\) such that the restricted operator \(A_{\mathrm{res}} \colon \mathcal{D} \to \mathcal{R}\) becomes bijective. For any exact data \(y \in \mathcal{R}\), the inverse problem then has a unique solution in \(\mathcal{D}\), given by \(x = A_{\mathrm{res}}^{-1}(y)\).

Second, to mitigate instability \ref{ip3}, one introduces a family of continuous, stable mappings \(B_\alpha \colon Y \to X\), depending on a regularization parameter \(\alpha > 0\), which approximate \(A_{\mathrm{res}}^{-1}\) in a suitable sense as \(\alpha \to 0\); see Section~\ref{sec:reg} for a precise definition.

The choice of the admissible set \(\mathcal{D}\) is crucial, as it defines the class of desired reconstructions and acts as a selection principle for choosing a particular solution of the noise-free inverse problem. However, \(\mathcal{D}\) is rarely known exactly and is often only implicitly defined. For instance, in X-ray computed tomography (CT), \(\mathcal{D}\) would represent all physically plausible attenuation distributions within a patient; an extremely complex and poorly characterized function class.

Among the most successful regularization frameworks are variational methods and their extensions \cite{scherzer2009variational}. In this setting, \(\mathcal{D}\) is defined implicitly via a regularization functional that quantifies deviation from desirable properties. Prominent examples include quadratic Hilbert-space penalties \cite{engl1996regularization}, total variation (TV) regularization \cite{acar1994analysis}, and sparsity-promoting \(\ell^q\)-penalties \cite{daubechies2004iterative,grasmair2008sparse}. These handcrafted regularization functionals are well understood, supported by efficient algorithms, and come with rigorous convergence guarantees. However, their limited expressiveness makes them insufficient to capture the complex statistical and structural characteristics of modern high-dimensional applications, such as those arising in medical imaging.

\subsection{Right inverses}

As discussed in the introduction, the core difficulty of inverse problems stems from their ill-posedness.  
To address potential non-existence and non-uniqueness of solutions, one typically restricts the domain of admissible data and considers right inverses defined on a subset $\mathcal{R} \subseteq Y$.

\paragraph*{Algebraic right inverses:}

We first consider right inverses that are not necessarily linear and start with their purely algebraic form.

\begin{definition}[Right inverse] \label{def:right}
A (possibly non-linear) mapping $B \colon \mathcal{R} \subseteq Y \to X$ is called a \emph{right inverse} of $A$ if
\[
A(B(y)) = y \qquad \text{for all } y \in \mathcal{R}.
\]
\end{definition}

With $\mathcal{D} \coloneqq B(\mathcal{R})$, the operator $A$ restricts to a bijection
\[
A_{\mathrm{res}} \colon \mathcal{D} \to \mathcal{R} \colon \; x \mapsto A x,
\]
and $B_{\mathrm{res}} \colon \mathcal{R} \to \mathcal{D}$ is precisely the inverse of this restricted map.  
A right inverse exists if and only if $\mathcal{R} \subseteq \operatorname{ran}(A)$, and this existence is purely algebraic: for every $y \in \mathcal{R}$ one may choose any $x \in X$ satisfying $A x = y$.  
In contrast, continuity of a right inverse is a far stronger requirement and typically fails for ill-posed operators.

\paragraph*{Continuous right inverses:}

The following result characterises when a continuous right inverse can exist. For linear right inverses, this is classical; see \cite{nashed1987inner}.

\begin{proposition}[Continuous right inverses] \label{prop:right-inverse-cont}
Let $B \colon \operatorname{ran}(A) \to X$ be a continuous right inverse.  
Then $\operatorname{ran}(A)$ is closed.
\end{proposition}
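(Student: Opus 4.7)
The plan is to reduce the nonlinear problem to a linear one by factoring $A$ through the quotient $X/\ker(A)$, exploiting that the algebraic inverse of a linear bijection is automatically linear regardless of how $B$ was constructed. Concretely, I would form the induced map
\[
Q : X/\ker(A) \to Y, \qquad [x] \mapsto A x,
\]
which is well-defined, bounded, linear, and injective with image $\ran(A)$. Thus $Q$ is a continuous linear bijection from the Banach space $X/\ker(A)$ onto $\ran(A)$, and its set-theoretic inverse $Q^{-1} \colon \ran(A) \to X/\ker(A)$ is automatically linear.

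Next, I would identify $Q^{-1}$ through $B$: for any $y \in \ran(A)$, the equality $Q([B(y)]) = A(B(y)) = y$ together with injectivity of $Q$ gives $Q^{-1}(y) = [B(y)]$. Continuity of $B$ at $0 \in \ran(A)$, combined with the fact that the quotient map $\pi \colon X \to X/\ker(A)$ is $1$-Lipschitz, then shows $Q^{-1}$ is continuous at $0$ (note that $B(0) \in \ker(A)$, so $[B(0)] = 0$). Since $Q^{-1}$ is linear, continuity at a single point upgrades to a global bound $\|Q^{-1}(y)\| \leq C \|y\|$ for some $C > 0$ and all $y \in \ran(A)$.

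To close the argument, I would pick $y \in \overline{\ran(A)}$ and a sequence $(y_n) \subset \ran(A)$ with $y_n \to y$ in $Y$. Setting $\xi_n \coloneqq Q^{-1}(y_n)$, linearity and boundedness give $\|\xi_n - \xi_m\| \leq C \|y_n - y_m\|$, so $(\xi_n)$ is Cauchy in the Banach space $X/\ker(A)$ and converges to some $\xi$. Continuity of $Q$ then yields $y_n = Q(\xi_n) \to Q(\xi) \in \ran(A)$, so $y = Q(\xi) \in \ran(A)$, which proves closedness.

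The main obstacle I anticipate is that mere pointwise continuity of the nonlinear $B$ does not by itself force it to send Cauchy sequences to Cauchy sequences, which is exactly what a naive closedness proof would require; indeed $(B(y_n))$ need not even be bounded without further structure. The composition-with-quotient trick circumvents this: the map $y \mapsto [B(y)]$ is secretly linear because it coincides with the algebraic inverse of the linear bijection $Q$, and for linear maps continuity at a single point is equivalent to uniform Lipschitz continuity on the whole domain, which is the ingredient the closedness argument actually needs.
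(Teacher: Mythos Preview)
Your proof is correct and follows essentially the same route as the paper: factor $A$ through the quotient to obtain the linear bijection $Q$ (the paper's $A_\diamond$), identify $Q^{-1}$ with $\pi\circ B$, and use continuity of this composition to conclude. The only cosmetic differences are that the paper observes $\pi\circ B$ is continuous everywhere directly (no need to argue continuity at $0$ first and then upgrade via linearity), and it finishes by noting that $\ran(A)$ is linearly homeomorphic to the Banach space $X/\ker(A)$ and hence complete, whereas you spell out the equivalent Cauchy-sequence argument explicitly.
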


\begin{proof}
Let $\Pi \colon X \to X / \ker(A)$ be the quotient map and let $A_\diamond \colon X / \ker(A) \to \operatorname{ran}(A)$ be the induced bounded linear bijection defined by $A_\diamond([x]) = A x$. Define $S \colon \operatorname{ran}(A) \to X / \ker(A)$ by $S := \Pi \circ B$. Then $S$ is continuous and $A_\diamond \circ S \;=\; A_\diamond(\Pi(B(y))) \;=\; A(B(y)) \;=\; y$ for all $y \in \operatorname{ran}(A)$. 
Since $A_\diamond$ is bijective, this shows $S = A_\diamond^{-1}$, hence $A_\diamond^{-1}$ is continuous. Because $X/\ker(A)$ is a Banach space, $\operatorname{ran}(A)$ is (via $A_\diamond$) isomorphic to a Banach space and therefore complete. A complete subspace of $Y$ must be closed, so $\operatorname{ran}(A)$ is closed.
\end{proof}

Thus, no continuous right inverse can exist when $\operatorname{ran}(A)$ is non-closed.  
This is the generic situation in inverse problems: many integral operators, the Radon transform and all compact operators, have non-closed ranges \cite{engl1996regularization,scherzer2009variational,natterer2001mathematics}.

\paragraph*{Linear right inverses:}

Another natural question concerns the existence of a \emph{linear} right inverse.  
Recall that a mapping $P \colon X \to X$ is called a projection if $P^2 = P$.  
For a bounded linear projection, both $\operatorname{ran}(P)$ and $\ker(P)$ are closed, and 
\[
X = \operatorname{ran}(P) \oplus \ker(P).
\]

\begin{definition}[Complemented subspace]
A closed subspace $V \subseteq X$ is called \emph{complemented} if there exists a bounded linear projection $P$ with $\operatorname{ran}(P) = V$.
\end{definition}

Equivalently, $V \subseteq X$ is complemented if and only if there exists another closed subspace $U$ such that $X = U \oplus V$.  
In Hilbert spaces, every closed subspace is complemented via orthogonal projections,
\begin{align*}
X &= V^\perp \oplus V, \\
V^\perp &\coloneqq \{ u \in X \mid \langle u, v \rangle = 0 \text{ for all } v \in V \}, \\
P(x) &= \arg\min \{ \|x-z\| \mid z \in V \}.
\end{align*}
In contrast, non-complemented subspaces exist in general Banach spaces \cite{lindenstrauss1971complemented}.

\begin{proposition}[Linear right inverses] \label{prop:right-inverse-lin}
\leavevmode
\begin{enumerate}[label=(\alph*)]
\item \label{prop:rightlin1}
$A$ has a linear right inverse on $\operatorname{ran}(A)$ if and only if $\ker(A)$ is complemented.
\item \label{prop:rightlin2}
A linear right inverse on $\operatorname{ran}(A)$ is continuous if and only if $\operatorname{ran}(A)$ is closed.
\end{enumerate}
\end{proposition}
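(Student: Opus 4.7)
The plan is to relate linear right inverses of $A$ with bounded projections onto $\ker(A)$: given any linear right inverse $B$, the operator $P := \operatorname{id} - B \circ A$ will be a projection with range $\ker(A)$; conversely, given a bounded projection $Q$ onto $\ker(A)$, the restriction of $A$ to the closed complement $\ker(Q)$ is invertible and yields a linear right inverse. Continuity in (b) is then controlled by the open mapping theorem in one direction and by Proposition \ref{prop:right-inverse-cont} in the other.

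For the forward direction of (a), I would suppose $B \colon \operatorname{ran}(A) \to X$ is a bounded linear right inverse and set $P := \operatorname{id} - B \circ A$. Using $A \circ B = \operatorname{id}$ on $\operatorname{ran}(A)$, a short calculation gives $(BA)^2 = B(AB)A = BA$, hence $P^2 = P$, so $P$ is a bounded projection. The identification $\operatorname{ran}(P) = \ker(A)$ follows from $A \circ P = 0$ (one inclusion) together with $P|_{\ker(A)} = \operatorname{id}$, which holds since $B(0) = 0$ (the reverse inclusion). Thus $\ker(A)$ arises as the range of a bounded projection and is complemented.

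For the converse direction of (a), I would let $Q$ be a bounded projection onto $\ker(A)$ and set $U := \ker(Q)$. The standard projection identities give the topological direct sum $X = U \oplus \ker(A)$ with $U$ closed. The restriction $A|_U \colon U \to \operatorname{ran}(A)$ is then a linear bijection: injectivity follows from $U \cap \ker(A) = \{0\}$, and surjectivity from the splitting $x = (x - Qx) + Qx$, which shows every $Ax \in \operatorname{ran}(A)$ equals $A(x - Qx)$ with $x - Qx \in U$. Consequently $B := (A|_U)^{-1}$ is a linear right inverse of $A$.

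For (b), continuity of any linear right inverse forces $\operatorname{ran}(A)$ to be closed by Proposition \ref{prop:right-inverse-cont}. Conversely, if $\operatorname{ran}(A)$ is closed it is itself a Banach space, so the restriction $A|_U$ constructed above is a continuous linear bijection between the Banach spaces $U$ and $\operatorname{ran}(A)$, and the bounded inverse theorem yields continuity of $B = (A|_U)^{-1}$. The main subtlety to flag is that a linear right inverse is not unique — one may always add an arbitrary linear map into $\ker(A)$ without destroying the right-inverse property — so (b) is best read as characterising when a continuous linear right inverse \emph{exists}, with the canonical choice $(A|_U)^{-1}$ realising it; this is the only nontrivial point requiring care, since the open mapping step is then routine.
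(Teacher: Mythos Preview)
Your proof is correct and follows essentially the same route as the paper's: both use $BA$ (equivalently your $P=\operatorname{id}-BA$) as the projection for (a), and both invoke Proposition~\ref{prop:right-inverse-cont} together with the bounded inverse theorem for (b). Your explicit remark on non-uniqueness and the existential reading of (b) is a subtlety the paper's proof glosses over.
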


\begin{proof}
\noindent\ref{prop:rightlin1}  
If $A$ admits a linear right inverse $B$ on $\operatorname{ran}(A)$, then $BA$ is a bounded projection on $X$ since $(BA)^2 = B(AB)A = BA$.  
This yields the decomposition $X = \operatorname{ran}(BA) \oplus \ker(BA)$, where $\ker(BA) = \ker(A)$, showing that $\ker(A)$ is complemented.  
Conversely, if $\ker(A)$ is complemented, write $X = X_1 \oplus \ker(A)$.  
Then $A_{\mathrm{res}} \colon X_1 \to \operatorname{ran}(A)$ is bijective, and its inverse $A_{\mathrm{res}}^{-1}$ defines a linear right inverse of $A$.

\smallskip
\noindent\ref{prop:rightlin2}  
If $B$ is continuous, Proposition~\ref{prop:right-inverse-cont} implies that $\operatorname{ran}(A)$ is closed.  
Conversely, if $\operatorname{ran}(A)$ is closed, the restricted mapping $A_{\mathrm{res}} \colon X_1 \to \operatorname{ran}(A)$ is bijective and, by the bounded inverse theorem, bounded.  
Thus its inverse $B = A_{\mathrm{res}}^{-1}$ is continuous.
\end{proof}

In Hilbert spaces, every closed subspace (including $\ker(A)$) is complemented, implying that any bounded linear operator $A$ possesses a linear right inverse.

\begin{proposition}[Right inverses in Hilbert spaces] \label{prop:right-hil}
Let $X$ be a Hilbert space and $P_{\ker(A)} \colon X \to X$ the orthogonal projection onto $\ker(A)$.
\begin{enumerate}[label=(\alph*)]
\item \label{prop:right-h1}
$A$ admits a unique linear right inverse $A^+ \colon \operatorname{ran}(A) \to X$ satisfying $AA^+ = \mathrm{id}_{\operatorname{ran}(A)}$ and $A^+A = \id_X - P_{\ker(A)}$.
\item \label{prop:right-h2}
For every $y \in \operatorname{ran}(A)$,
\(
A^+(y) = \arg\min_{x \in X} \{ \|x\| \mid Ax = y \}
\).
\item \label{prop:right-h3}
$A^+$ is continuous if and only if $\operatorname{ran}(A)$ is closed.
\item \label{prop:right-h4}
If $\operatorname{ran}(A)$ is non-closed, then every right inverse of $A$ is discontinuous.
\end{enumerate}
\end{proposition}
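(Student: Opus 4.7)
The plan is to reduce everything to the Hilbert-space orthogonal decomposition $X = \ker(A)^\perp \oplus \ker(A)$ together with Propositions~\ref{prop:right-inverse-cont} and~\ref{prop:right-inverse-lin}. Since $A$ is bounded, $\ker(A)$ is closed, hence complemented by $P_{\ker(A)}$. Setting $X_1 \coloneqq \ker(A)^\perp$, the restriction $A_{\mathrm{res}} \colon X_1 \to \operatorname{ran}(A)$ is a linear bijection, exactly as in the proof of Proposition~\ref{prop:right-inverse-lin}\ref{prop:rightlin1}.

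For \ref{prop:right-h1}, I would define $A^+ \coloneqq A_{\mathrm{res}}^{-1} \colon \operatorname{ran}(A) \to X_1 \subseteq X$ and verify the two identities directly: $A A^+ y = y$ by construction, and for $x = x_1 + x_0$ with $x_1 \in X_1$, $x_0 \in \ker(A)$, one gets $A^+ A x = A^+ A x_1 = x_1 = x - P_{\ker(A)} x$. For uniqueness, if $B$ is any linear right inverse with $BA = \id_X - P_{\ker(A)}$, then for $y = Ax \in \operatorname{ran}(A)$ one has $B y = B A x = x - P_{\ker(A)} x \in X_1$, and this value depends only on $y$, forcing $B = A^+$.

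For \ref{prop:right-h2}, the fibre $A^{-1}(\{y\})$ equals $A^+ y + \ker(A)$. Since $A^+ y \in \ker(A)^\perp$, Pythagoras gives $\|A^+ y + k\|^2 = \|A^+ y\|^2 + \|k\|^2$ for every $k \in \ker(A)$, so the minimum norm solution is attained uniquely at $A^+ y$.

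Part \ref{prop:right-h3} is an immediate application of Proposition~\ref{prop:right-inverse-lin}\ref{prop:rightlin2}, and \ref{prop:right-h4} follows from Proposition~\ref{prop:right-inverse-cont}: a continuous right inverse (linear or not) on $\operatorname{ran}(A)$ would force $\operatorname{ran}(A)$ to be closed, contradicting the assumption. No real obstacle arises; the only point requiring mild care is formulating the uniqueness statement in \ref{prop:right-h1} correctly, as without the constraint $A^+ A = \id_X - P_{\ker(A)}$ there would of course be infinitely many linear right inverses differing by maps into $\ker(A)$.
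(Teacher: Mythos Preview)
Your proposal is correct and follows essentially the same route as the paper: orthogonal complementation of $\ker(A)$ in the Hilbert space, appeal to Propositions~\ref{prop:right-inverse-cont} and~\ref{prop:right-inverse-lin} for \ref{prop:right-h1}, \ref{prop:right-h3}, \ref{prop:right-h4}, and Pythagoras for \ref{prop:right-h2}. Your treatment of the uniqueness claim in \ref{prop:right-h1} is in fact more explicit than the paper's, which simply absorbs that part into the reference to the earlier propositions.
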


\begin{proof}
In Hilbert spaces, the orthogonal complement $\ker(A)^\perp$ provides a complement of $\ker(A)$, and Propositions~\ref{prop:right-inverse-cont} and \ref{prop:right-inverse-lin} yield items \ref{prop:right-h1}, \ref{prop:right-h3}, and \ref{prop:right-h4}.  
For \ref{prop:right-h2}, any solution of $Ax = y$ decomposes as $x = x_1 + x_2$ with $x_1 \in \ker(A)^\perp$ and $x_2 \in \ker(A)$.  
By Pythagoras’ theorem, $\|x\|^2 = \|x_1\|^2 + \|x_2\|^2$, so the minimal-norm solution is $x_2 = 0$.
\end{proof}

If $X$ and $Y$ are both Hilbert spaces, the mapping $A^+$ extends uniquely to $\operatorname{ran}(A) \oplus \operatorname{ran}(A)^\perp$ by $A^+(y_1 + y_2) = A^+(y_1)$.  
This operator is the \emph{Moore–Penrose inverse} of $A$.  
For further background on generalized inverses in Hilbert and Banach spaces, see \cite{nashed1987inner}.


\subsection{Regularization methods}
\label{sec:reg}

Let $B \colon \ran(A) \subseteq Y \to X$ be a right inverse of $A$, and define $\mathcal{D} \coloneqq \ran(B)$. 
Suppose that $\mathcal{D}_0 \subseteq \mathcal{D}$ denotes a potentially further restricted   class of desirable signals. 
Common notion of a regularization method for the inverse problem \eqref{eq:ip} is of a family of operators $(B_\alpha)_{\alpha > 0}$ together with a parameter choice rule $\alpha^* = \alpha^*(\delta, y^\delta)$; see \cite{engl1996regularization,scherzer2009variational}. For our purpose  the resulting reconstruction $B_{\alpha^*(\delta, y^\delta)}(y^\delta)$ is of particular interest and we combine both ingredients into a single mapping $\mathcal{B}(\delta, y^\delta)$.

\begin{definition}[Regularization method]\label{def:reg}
A function $\mathcal{B} \colon (0, \infty) \times Y \to X$ satisfying
\[
\forall x \in \mathcal{D}_0 \colon  
\lim_{\delta \to 0} \sup \bigl\{ \norm{x - \mathcal{B}(\delta, y^\delta)} \ \big|\ y^\delta \in Y,\, \norm{A(x)- y^\delta} \le \delta \bigr\} = 0
\]
is called a (convergent) regularization method for \eqref{eq:ip} on the signal class $\mathcal{D}_0$. We also write $(\mathcal{B}_\delta)_{\delta > 0}$ instead of $\mathcal{B}$.
\end{definition}

A practical guideline for constructing regularization methods is to approximate the right inverse $B$ pointwise by a family of continuous mappings.

\begin{proposition}[Pointwise approximations as regularizations]\label{prop:point}
Let $(B_\alpha)_{\alpha > 0}$ be a family of continuous operators $B_\alpha \colon Y \to X$ that converge uniformly to $B$ on $A(\mathcal{D}_0)$ as $\alpha \to 0$. Then there exists a function $\alpha_0 \colon (0, \infty) \to (0, \infty)$ such that
\(
\mathcal{B}(\delta, y^\delta) \coloneqq B_{\alpha_0(\delta)}(y^\delta)
\)
defines a regularization method for \eqref{eq:ip} on $\mathcal{D}_0$.
\end{proposition}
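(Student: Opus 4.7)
The plan is a triangle-inequality decomposition into an approximation error and a stability error, followed by an a priori choice of $\alpha_0$ that drives both to zero.

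First, since $x \in \mathcal{D}_0 \subseteq \ran(B)$ and $B$ is a right inverse, $B(Ax) = x$. For any $\alpha > 0$ and any $y^\delta$ with $\snorm{Ax - y^\delta} \le \delta$, the triangle inequality gives
\[
\snorm{x - B_\alpha(y^\delta)} \;\le\; \snorm{B(Ax) - B_\alpha(Ax)} + \snorm{B_\alpha(Ax) - B_\alpha(y^\delta)} \;\le\; \eta(\alpha) + \mu_\alpha^x(\delta),
\]
where $\eta(\alpha) := \sup_{y \in A(\mathcal{D}_0)} \snorm{B(y) - B_\alpha(y)}$ tends to $0$ as $\alpha \to 0$ by the uniform-convergence hypothesis, and $\mu_\alpha^x(\delta) := \sup_{\snorm{y-Ax}\le\delta} \snorm{B_\alpha(y) - B_\alpha(Ax)}$ tends to $0$ as $\delta \to 0$ for every fixed $\alpha$ and $x$ by continuity of $B_\alpha$ at $Ax$.

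Second, I would realise $\alpha_0$ as a piecewise constant ``staircase'' function: pick $\alpha_n \downarrow 0$ and a thinning sequence $\delta_n \downarrow 0$, and set $\alpha_0(\delta) := \alpha_n$ for $\delta \in [\delta_{n+1},\delta_n)$. Then $\alpha_0(\delta) \to 0$ as $\delta \to 0$, so $\eta(\alpha_0(\delta)) \to 0$ uniformly in $x \in \mathcal{D}_0$. For the stability term, the pointwise continuity of each $B_{\alpha_n}$ at $Ax$ furnishes $s(n,x) > 0$ with $\mu_{\alpha_n}^x(s) \le 1/n$ whenever $s \le s(n,x)$, so it suffices to let the $\delta_n$ decrease fast enough that for each individual $x \in \mathcal{D}_0$ one eventually has $\delta_n \le s(n,x)$, whence $\mu_{\alpha_n}^x(\delta_n) \to 0$.

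The main obstacle is precisely this coupling: $\alpha_0$ must not depend on $x$, yet the stability moduli $\mu_\alpha^x$ depend on $x$ and are generally not equicontinuous in $\alpha$. What makes the staircase work is that the regularization property of Definition~\ref{def:reg} is formulated pointwise in $x$, so the step widths $\delta_n$ only have to fall eventually below the continuity radius at each individual point $Ax$. Combining the uniform approximation bound $\eta(\alpha_0(\delta)) \to 0$ with the $x$-wise stability bound $\mu_{\alpha_0(\delta)}^x(\delta) \to 0$ in the triangle inequality yields $\sup_{\snorm{Ax-y^\delta}\le\delta} \snorm{x - B_{\alpha_0(\delta)}(y^\delta)} \to 0$ for every $x \in \mathcal{D}_0$, which is precisely the defining property of a regularization method.
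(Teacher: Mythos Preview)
Your decomposition and overall strategy match the paper's proof: the same triangle-inequality split into an approximation error $\|B(Ax)-B_\alpha(Ax)\|$ (controlled uniformly on $A(\mathcal{D}_0)$ by the hypothesis) and a stability error $\|B_\alpha(Ax)-B_\alpha(y^\delta)\|$ (controlled by continuity of $B_\alpha$), followed by an a priori parameter choice. The paper writes $\alpha_0=\alpha\circ\tau^{-1}$ for a continuous modulus $\tau$; your staircase is an equivalent discrete realisation of the same idea.

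The genuine gap sits exactly where you yourself flag ``the main obstacle.'' You assert that it suffices to let the $\delta_n$ decrease fast enough that for each individual $x$ one eventually has $\delta_n\le s(n,x)$, but you never show that such a universal sequence exists. Continuity of $B_{\alpha_n}$ furnishes, for each fixed $x$, a radius $s(n,x)>0$; with an uncountable $\mathcal{D}_0$, however, there is no lower control on $s(n,\cdot)$, and nothing in the hypotheses guarantees a single sequence $(\delta_n)$ that eventually undercuts $s(n,x)$ for \emph{every} $x$. For countable $\mathcal{D}_0=\{x_1,x_2,\ldots\}$ the diagonal choice $\delta_n=\min\{1/n,\,s(n,x_1),\ldots,s(n,x_n)\}$ would close the argument, and if each $B_\alpha$ is uniformly continuous on a neighbourhood of $A(\mathcal{D}_0)$ (or $A(\mathcal{D}_0)$ is relatively compact) one obtains an $x$-independent radius; but neither is assumed. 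The paper's own proof shares this lacuna --- it selects a single $\tau(\varepsilon)$ meant to work for all $y=Ax$ with $x\in\mathcal{D}_0$, which tacitly presumes the same uniformity --- so you have not fallen below the paper's level of rigour; you have simply made explicit the soft spot it glosses over, without resolving it.
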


\begin{proof}
For any $\varepsilon > 0$, choose $\alpha(\varepsilon)$ such that $\norm{B_{\alpha(\varepsilon)}(y) - x} \le \varepsilon/2$ for all $x \in \mathcal{D}_0$ and  $y = Ax$. Moreover, choose $\tau(\varepsilon)$ so that, for all $z \in Y$ with $\norm{y - z} \le \tau(\varepsilon)$, we have $\norm{B_{\alpha(\varepsilon)}(y) - B_{\alpha(\varepsilon)}(z)} \le \varepsilon/2$. Assume $\tau$ is strictly increasing, continuous, and satisfies $\tau(0+) = 0$, and define $\alpha_0 = \alpha  \tau^{-1}$. Then, for $\norm{y - y^\delta} \le \delta$,
\[
\norm{B_{\alpha_0(\delta)}(y^\delta) - x} 
\le \norm{B_{\alpha_0(\delta)}(y) - x} 
+ \norm{B_{\alpha_0(\delta)}(y) - B_{\alpha_0(\delta)}(y^\delta)} 
\le \tau^{-1}(\delta),
\]
and since $\tau^{-1}(\delta) \to 0$ as $\delta \to 0$, the claim follows.
\end{proof}

\begin{remark}[DC family] \label{rem:DC}
We call $(\mathcal{B}_\delta)_{\delta>0}$ a data-consistent (DC) family if $A \mathcal{B}_\delta A x \to A x$ as $\delta \to 0$ for all $x \in X$. If $B$ is a right inverse and $(\mathcal{B}_\delta)_{\delta>0}$ is a regularization method with $\mathcal{B}_\delta \to B$ pointwise on $\operatorname{ran}(A)$, then $\|A \mathcal{B}_\delta A x - A x\| \le \|A\|\,\|\mathcal{B}_\delta A x - B A x\| \to 0$ as $\delta \to 0$. Thus any consistent regularization method is a DC family. However, this requires that the limit $B$ is a right inverse, hence strictly DC and unbiased. In practice, methods may fail to be DC when the limiting $B$ is not a right inverse. An example of this issue is limited-data tomography with $A = M R$ (full Radon transform $R$, mask $M$). Commonly, filtered backprojection $B$ (which, in the noise-free limit with full data, is a right inverse of $R$) is applied to masked data $y = A x = M R x$, yielding $A B A  \neq A$, so DC fails.
\end{remark}

\paragraph*{Tikhonov regularization:}

A fundamental class of regularization methods is \emph{convex variational regularization}, based on a convex functional $\Omega \colon X \to [0,\infty]$. Here, right inverses are approximated by $\Omega$-minimizing solutions of $A(x)=y$, namely elements in $\argmin \{ \Omega(x) \mid A(x)=y \}$. Convex variational regularization is typically realized by minimizing the Tikhonov functional $\norm{A(x) - y^\delta}^2 + \alpha \Omega(x)$ for given data $y^\delta \in Y$ and regularization parameter $\alpha > 0$.

For illustrative purpose, we restrict to norm-based Tikhonov regularization in Banach spaces
\begin{equation} \label{eq:tikhonov-banach}
B_\alpha(y^\delta) \coloneqq \argmin_{x \in X} \{ \norm{A(x) - y^\delta}^p + \alpha \norm{x}^q \} \,,
\end{equation}
where $p,q>1$.
For that, recall the Radon–Riesz (Kadec–Klee) property of a Banach space $X$: for any sequence $(x_k)_k \in X^\N$, weak convergence $x_k \rightharpoonup x$ together with norm convergence $\|x_k\| \to \|x\|$ implies strong convergence $x_k \to x$ (see, e.g., \cite{megginson1998banach,ito2014inverse}). Hilbert spaces and $L^p$ spaces with $1<p<\infty$ satisfy the Radon–Riesz property (indeed, all uniformly convex Banach spaces do). It is, however, not satisfied for $L^p$ with $p=1$ or $p=\infty$.

\begin{theorem}[Tikhonov regularization in Banach spaces] \label{thm:var-convex}
Let $X$ be reflexive, strictly convex, and satisfy the Radon–Riesz  property. Then:
\begin{enumerate}[label=(\alph*)]
\item $A^\dag \colon \ran(A) \to X \colon y  \mapsto \argmin\{\norm{x} \mid A(x) = y\}$ is well-defined.

\item  $ \forall \alpha > 0 \colon B_\alpha \colon Y \to X$ is well-defined by \eqref{eq:tikhonov-banach} and continuous.
\item If $\alpha_0(\delta) \to 0$ and $\delta^p / \alpha_0(\delta) \to 0$ as $\delta \to 0$, then $\mathcal{B}(\delta, y^\delta) = B_{\alpha_0(\delta)}(y^\delta)$ defines a regularization method for \eqref{eq:ip} on $A^\dag(X)$ with respect to $\norm{\cdot}$.
\end{enumerate}
\end{theorem}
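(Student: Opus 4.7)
\medskip\noindent\textbf{Proof plan.}

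For part (a), the plan is to apply the direct method of the calculus of variations. Given $y \in \operatorname{ran}(A)$, the affine set $A^{-1}(y)$ is non-empty, closed, and convex; the functional $x \mapsto \|x\|$ is proper, convex, coercive, and weakly lower semicontinuous. Take a minimising sequence $(x_k)$; by coercivity it is bounded, by reflexivity it has a weakly convergent subsequence $x_{k_j} \rightharpoonup x^\dag$, and since $A$ is continuous linear (hence weak--weak continuous) we have $A x^\dag = y$. Weak lower semicontinuity of the norm gives $\|x^\dag\| \le \liminf_j \|x_{k_j}\|$, so $x^\dag$ is a minimiser. Uniqueness follows from strict convexity: two distinct minimisers would force a strict decrease of the norm at their midpoint while keeping data consistency.

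For part (b), well-definedness of $B_\alpha(y^\delta)$ goes by the same direct method applied to the Tikhonov functional $T_\alpha(x) \coloneqq \|A x - y^\delta\|^p + \alpha \|x\|^q$, which is proper, coercive (since $q>1$ and $\alpha>0$), strictly convex, and weakly lower semicontinuous. For continuity, I fix $y^\delta$ and consider any sequence $y_k \to y^\delta$. Writing $x_k \coloneqq B_\alpha(y_k)$ and $x^* \coloneqq B_\alpha(y^\delta)$, the bound $T_\alpha^{(y_k)}(x_k) \le T_\alpha^{(y_k)}(x^*)$ together with convergence of $y_k$ shows $(x_k)$ is bounded; pass to a weak limit $\bar x$ along a subsequence and use weak lower semicontinuity to conclude that $\bar x$ minimises $T_\alpha^{(y^\delta)}$, so $\bar x = x^*$ by uniqueness, and in fact $\|x_k\| \to \|x^*\|$ by comparing upper and lower limits in the minimisation inequality. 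The Radon--Riesz property then upgrades $x_k \rightharpoonup x^*$ to $x_k \to x^*$, and the usual subsequence argument gives continuity of the full sequence.

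For part (c), let $x \in A^\dag(X)$ and $y \coloneqq A x$, so $x = A^\dag(y)$ is the minimal-norm solution. For $y^\delta$ with $\|y - y^\delta\| \le \delta$, set $x_\delta \coloneqq B_{\alpha_0(\delta)}(y^\delta)$. Testing the minimality of $x_\delta$ against $x$ yields
\begin{equation*}
\|A x_\delta - y^\delta\|^p + \alpha_0(\delta)\|x_\delta\|^q \;\le\; \|A x - y^\delta\|^p + \alpha_0(\delta)\|x\|^q \;\le\; \delta^p + \alpha_0(\delta)\|x\|^q.
\end{equation*}
This gives both $\|x_\delta\|^q \le \delta^p/\alpha_0(\delta) + \|x\|^q$ and $\|A x_\delta - y^\delta\|^p \le \delta^p + \alpha_0(\delta)\|x\|^q$. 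Under the parameter choice $\alpha_0(\delta)\to 0$ and $\delta^p/\alpha_0(\delta)\to 0$, the first estimate shows that $(x_\delta)$ is bounded with $\limsup \|x_\delta\| \le \|x\|$, and the second shows $A x_\delta \to y = A x$.

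To finish, I use the standard subsequence trick to obtain the uniform convergence required by Definition~\ref{def:reg}. Suppose the conclusion fails: then there exist $\varepsilon>0$, $\delta_k \to 0$, and $y^{\delta_k}$ with $\|A x - y^{\delta_k}\| \le \delta_k$ such that $\|x - x_{\delta_k}\| \ge \varepsilon$. By reflexivity and boundedness, extract a weakly convergent subsequence $x_{\delta_k} \rightharpoonup \bar x$; weak--weak continuity of $A$ together with $A x_{\delta_k} \to A x$ gives $A \bar x = A x$, and weak lower semicontinuity of the norm combined with $\limsup \|x_{\delta_k}\| \le \|x\|$ gives $\|\bar x\| \le \|x\|$. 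Since $x$ is the unique minimal-norm solution (by strict convexity) of $A(\cdot) = A x$, we conclude $\bar x = x$ and $\|x_{\delta_k}\| \to \|x\|$. The Radon--Riesz property then upgrades this to $x_{\delta_k} \to x$ strongly, contradicting $\|x - x_{\delta_k}\| \ge \varepsilon$.

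The main obstacle is the final step: ensuring that the convergence is uniform over all admissible $y^\delta$, and correctly identifying the weak limit as the minimal-norm solution $x$. The key ingredients are strict convexity (for uniqueness of the minimal-norm solution), weak lower semicontinuity (for the two inequalities $\|\bar x\| \le \liminf \|x_{\delta_k}\|$ and identification of $A \bar x$), and the Radon--Riesz property (to convert weak plus norm convergence into strong convergence). Reflexivity is used throughout for the weak compactness arguments.
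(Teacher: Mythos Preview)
Your proof plan is correct and follows the standard variational approach to Tikhonov regularization in Banach spaces. The paper does not give its own proof of this theorem at all; it simply writes ``See \cite{ivanov2020theory,scherzer2009variational}.'' Your argument is essentially the one found in those references: existence via the direct method using reflexivity and weak lower semicontinuity of the norm, uniqueness via strict convexity of $x\mapsto\|x\|^q$ on a strictly convex space, stability and convergence via the comparison inequality together with the Radon--Riesz property to upgrade weak plus norm convergence to strong convergence. So there is nothing to compare against beyond noting that you have supplied the details the paper chose to outsource.
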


\begin{proof}
See \cite{ivanov2020theory,scherzer2009variational}.
\end{proof}

In the Hilbert space setting, the mapping $A^\plus$ in Theorem~\ref{thm:var-convex} coincides with the Moore–Penrose inverse; see Proposition~\ref{prop:right-hil} and the subsequent discussion.

\begin{remark}[Residual convergence and rates]
In the classical Hilbert–space quadratic Tikhonov case, the source condition $x^\dag = (A^*A)^\nu w$ (with $\nu>0$) yields the familiar rate
\begin{align*}
\|x_{\alpha(\delta)}^{\delta}-x^\dag\| &= \mathcal{O}\bigl(\delta^{\tfrac{2\nu}{2\nu+1}}\bigr),\\
\|A x_{\alpha(\delta)}^{\delta}-y^\delta\| &= \mathcal{O}(\delta)
\end{align*}
for the choice $\alpha(\delta)\simeq\delta^{\tfrac{2}{2\nu+1}}$. 
Thus, in such a situation the residual converges at the order of the noise level. In Banach settings one obtains similar rates under appropriate assumptions \cite{hofmann2007convergence,resmerita2005regularization}.
\end{remark}

\section{Learned reconstruction} 
\label{sec:dc}

Now we turn our attention to learned reconstruction for \eqref{eq:ip}, where $A \colon X \to Y$ is a linear map between Banach spaces such that the null space is complemented with  $\ker(A) = \operatorname{ran}(\mathcal{P}_{\ker(A)})$.  We restrict our attention to two-step methods where the solution to \eqref{eq:ip} is defined by a reconstruction map 
\(g_{\theta, \delta} = f_\theta  \circ \mathcal{B}_\delta\), 
where $f_\theta \colon X \to X$ is an image-space network and $\mathcal{B}_\delta \colon Y \to X$ is an initial reconstruction map. We aim to derive methods that are regularization methods in the sense above. DC \eqref{eq:DC} turns out to be the key property for establishing both convergence and convergence rates. It will be realized through \emph{null-space networks} \(f_\theta\), which are allowed to act only in the null space of $A$.

\subsection{Null-space networks}
\label{sec:nullspace}

The idea of post-processing networks is to improve a given initial reconstruction map $\mathcal{B}_\delta$ by applying a neural network.
When $\mathcal{B}_\delta$ pointwise  converges to a right inverse, the initial reconstruction is DC (compare Remark \ref{rem:DC}). However, standard post-processing networks generally destroy DC. \emph{Null-space networks} form a natural class of architectures that preserve DC by construction.

\begin{definition}[Null-space network]\label{def:nsn}
For any family of Lipschitz continuous functions $\mathcal{U}_\theta \colon X \to X$ we call the family 
\begin{equation} \label{eq:null-neta} 
(f_\theta)_{\theta \in \Theta}
\qquad \text{ with } 
f_\theta \coloneqq \id_{X} + \mathcal{P}_{\ker(A)}  \mathcal{U}_\theta
\end{equation}
a \emph{null-space network} for $A$.   
\end{definition}

Sometimes by some abuse of notation we also refer to the individual mappings $f_\theta = \id_{X} + \mathcal{P}_{\ker(A)} \circ \mathcal{U}_\theta$ as null-space networks. Every such map  $f_\theta$ preserves DC in the sense that
\begin{equation} \label{eq:dc-null}
\norm{ A(x) - y} \leq \eps \quad \Rightarrow 
\quad 
\norm{ A(f_\theta(x)) - y} \leq \eps
\end{equation}
By contrast, a standard residual network $\id_{X} + \mathcal{U}_\theta$ used for post-processing does not, in general, satisfy \eqref{eq:dc-null}.

\begin{definition}[Regularizing null space network]
 Let $(f_\theta)_{\theta \in \Theta}$ be a null-space network and $(\mathcal{B}_\delta)_{\delta > 0}$ a regularization method for~\eqref{eq:ip} on the signal class $\mathcal{D}_0$. We then refer to $(f_\theta \circ \mathcal{B}_\delta)_{\delta > 0}$ as a \emph{regularizing null-space network}.
 \end{definition}

\begin{example}
$X$ and $Y$ be Hilbert spaces.  
A family $(g_\alpha)_{\alpha > 0}$ of functions $g_\alpha \colon [0, \|A^* A\|] \to \R$ is called a \emph{regularizing filter} if:
\begin{tritemize}
\item for each $\alpha > 0$, $g_\alpha$ is piecewise continuous;
\item there exists $C > 0$ such that $\sup_{\alpha > 0,\, \lambda \in [0, \|A^* A\|]} |\lambda g_\alpha(\lambda)| \le C$;
\item for all $\lambda \in (0, \|A^* A\|]$, $\lim_{\alpha \to 0} g_\alpha(\lambda) = 1 / \lambda$.
\end{tritemize}
For such a filter, define $B_\alpha \coloneqq g_\alpha(A^* A) A^*$.  
Then for a suitable parameter choice $\alpha = \alpha(\delta, y)$, the family $(B_{\alpha(\delta, \edot)})_{\delta > 0}$ forms a regularization method on $\ran(A^+)$.  
By Theorem~\ref{thm:conv}, the composed family
\[
(f_\theta \circ B_{\alpha(\delta, \edot)})_{\delta > 0}
\]
is a regularization method on $f_\theta(\ran(A^+))$.  
\end{example}

\begin{remark}[Computation of the projection layer]
A key component of a null-space network is the computation of the null-space projection $\mathcal{P}_{\ker(A)}$. In some cases, the projection can be computed explicitly. For example, for the subsampled Fourier transform $A=\mathcal{S}_I\mathcal{F}$, where $\mathcal{F}$ is the (unitary) Fourier transform and $\mathcal{S}_I$ selects frequency indices in $I$, one has $\mathcal{P}_{\ker(A)} = \mathcal{F}^*(I - \mathcal{S}_I^*\mathcal{S}_I)\mathcal{F}$ and equivalently $\mathcal{P}_{\ker(A)^\perp} = \mathcal{F}^* \mathcal{S}_I^*\mathcal{S}_I \mathcal{F}$, so $\mathcal{P}_{\ker(A)} = I - \mathcal{P}_{\ker(A)^\perp}$.

For general bounded linear forward operators between Hilbert spaces, $\mathcal{P}_{\ker(A)}$ can be computed numerically as the solution of the orthogonal projection problem $\mathcal{P}_{\ker(A)} z = \arg\min_{x \in X} \{ \|x - z\| \mid A x = 0 \}$, equivalently by minimizing $\|A x\|^2$ with initialization $x_0=z$. Landweber iterations $x_{k+1} = x_k - \tau A^*(A x_k)$ with $\tau \in (0,2/\|A\|^2)$, or conjugate gradients on the normal equations, converge to $\mathcal{P}_{\ker(A)} z$. Such strategies are particularly appealing to enforce DC  in a trained network. For end-to-end training of large-scale problems, the null-space projection layer can be expensive. In such cases, randomized SVD methods for approximating the null space  \cite{park2023fast} may provide an efficient alternative.
\end{remark}

\subsection{Convergence}
\label{sec:conv}

Throughout this section, let $(f_\theta)_{\theta \in \Theta}$ be a null-space network, let $B \colon \operatorname{ran}(A) \to X$ be a linear right inverse of $A$, and let $(\mathcal{B}_\delta)_{\delta > 0}$ be a regularization method for the inverse problem~\eqref{eq:ip} on a signal class $\mathcal{D}_0 \subseteq B(\operatorname{ran}(A))$ introduced in Definition~\ref{def:reg}.  Recall that, according to Definition \eqref{def:nsn}, the null-space network has the form $f_\theta = \mathrm{Id}_{X} + \mathcal{P}_{\ker(A)} \mathcal{U}_\theta$, with Lipschitz continuous $\mathcal{U}_\theta$.

\begin{proposition}[Learned right inverse]\label{prop:null}
The composition
\begin{equation}\label{eq:nullnet}
f_\theta \circ B \colon \ran(A) \to X \colon  
\quad y \mapsto (\id_{X} + \mathcal{P}_{\ker(A)} \circ \mathcal{U}_\theta)(B y)
\end{equation}
is a right inverse of $A$. Moreover, the following statements are equivalent:
\begin{enumerate}[label=(\roman*)]
\item $f_\theta \circ B$ is continuous,
\item $B$ is continuous,
\item $\ran(A)$ is closed.
\end{enumerate}
\end{proposition}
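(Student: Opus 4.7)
The plan has two independent parts: first verify that $f_\theta \circ B$ is algebraically a right inverse of $A$, and then establish the equivalence (i)–(iii) by a cyclic chain that reuses Propositions~\ref{prop:right-inverse-cont} and \ref{prop:right-inverse-lin}.

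For the right-inverse claim, I would compute directly. Given $y \in \ran(A)$, expand
\[
A\bigl((f_\theta \circ B)(y)\bigr) \;=\; A(By) \;+\; A\bigl(\mathcal{P}_{\ker(A)}\mathcal{U}_\theta(By)\bigr).
\]
The first term equals $y$ because $B$ is a right inverse, and the second term vanishes because $\ran(\mathcal{P}_{\ker(A)}) = \ker(A)$ implies $A\mathcal{P}_{\ker(A)} = 0$. This gives the identity $A \circ (f_\theta \circ B) = \id_{\ran(A)}$ without any regularity assumption.

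For the equivalence, I would argue cyclically (iii) $\Rightarrow$ (ii) $\Rightarrow$ (i) $\Rightarrow$ (iii). The implication (iii) $\Rightarrow$ (ii) is immediate from Proposition~\ref{prop:right-inverse-lin}\ref{prop:rightlin2}, since $B$ is a linear right inverse on $\ran(A)$. For (ii) $\Rightarrow$ (i), I would use that $\mathcal{P}_{\ker(A)}$ is a bounded linear projection (this is exactly the assumption that $\ker(A)$ is complemented with $\ker(A) = \ran(\mathcal{P}_{\ker(A)})$) and that $\mathcal{U}_\theta$ is Lipschitz continuous; consequently $f_\theta = \id_X + \mathcal{P}_{\ker(A)} \mathcal{U}_\theta$ is continuous, and the composition with the continuous $B$ is continuous. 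Finally, (i) $\Rightarrow$ (iii) is a direct application of Proposition~\ref{prop:right-inverse-cont} to the continuous right inverse $f_\theta \circ B$ established in the first part.

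I do not anticipate a serious obstacle here, since every step reduces to a quoted earlier result. The only point that requires a moment of care is making explicit that $\mathcal{P}_{\ker(A)}$ is genuinely bounded, so that $f_\theta$ inherits continuity from $\mathcal{U}_\theta$; this is built into the running hypothesis on $A$, but it deserves a brief mention to keep the argument for (ii) $\Rightarrow$ (i) self-contained.
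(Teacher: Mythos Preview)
Your proposal is correct and follows essentially the same approach as the paper: a direct computation for the right-inverse claim and an appeal to Propositions~\ref{prop:right-inverse-cont} and~\ref{prop:right-inverse-lin} for the equivalence. If anything, your cyclic argument is more explicit than the paper's brief reference to ``standard results,'' and you are right to invoke Proposition~\ref{prop:right-inverse-cont} for (i)~$\Rightarrow$~(iii), since $f_\theta \circ B$ is nonlinear and Proposition~\ref{prop:right-inverse-lin} alone does not cover that step.
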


\begin{proof}
Since $B$ is a right inverse, $A \circ B = \id_{\ran(A)}$.  
Using the DC property \eqref{eq:dc-null}, we obtain
\[
A\bigl((\id_{X} + \mathcal{P}_{\ker(A)} \circ \mathcal{U}_\theta)(B y)\bigr) = y,
\]
which shows that $f_\theta \circ B$ is a right inverse.  
The equivalence of the continuity statements follows from standard results on linear right inverses and closed-range operators; see Proposition~\ref{prop:right-inverse-lin}.
\end{proof}

\begin{remark}[Learning the right inverse]
Right inverses defined via null-space networks can be tailored to specific signal classes, yielding an effective DC learning framework. Training typically minimizes a loss of the form
\[
\mathbb{E}\, \| (\id_{X} + \mathcal{P}_{\ker(A)} \circ \mathcal{U}_\theta)(B(y)) - x \|^2,
\]
where $B$ may represent a classical reconstruction and $\mathbb{E}$ denotes expectation with respect to a distribution on $X$.  
Irrespective of the training procedure, any $\mathcal{U}_\theta$ produces an admissible right inverse.  
The choice $\mathcal{P}_{\ker(A)} \circ \mathcal{U}_\theta = 0$ recovers the initial classical right inverse.
\end{remark}

As Proposition~\ref{prop:null} shows, solving ill-posed problems with null-space networks still requires stabilization analogous to classical regularization. The next result proves that combining a null-space network with an initial regularization method $(\mathcal{B}_\delta)_{\delta > 0}$ again yields a convergent regularization method on a signal class associated with the null-space network.

\begin{theorem}[Learned regularization]\label{thm:conv}
Consider the regularizing null-space network defined by $(f_\theta)_{\theta \in \Theta}$ and the regularization $(\mathcal{B}_\delta)_{\delta > 0}$ on the signal class $\mathcal{D}_0$.  
Then for any $\theta \in \Theta$, the family $(f_\theta \circ \mathcal{B}_\delta)_{\delta > 0}$ is a regularization method for~\eqref{eq:ip} on the signal class $f_\theta(\mathcal{D}_0)$.
\end{theorem}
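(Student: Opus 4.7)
The plan is to reduce the convergence statement for the composed family to the one already known for the initial regularization $(\mathcal{B}_\delta)_{\delta > 0}$, using two elementary observations about $f_\theta$: that it is Lipschitz continuous with a constant independent of $\delta$, and that it preserves data consistency exactly.

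First I would unpack the definition. Fix $\tilde{x} \in f_\theta(\mathcal{D}_0)$ and write $\tilde{x} = f_\theta(x)$ for some $x \in \mathcal{D}_0$. I need to show
\[
\lim_{\delta \to 0} \sup \bigl\{ \norm{\tilde{x} - f_\theta(\mathcal{B}_\delta(y^\delta))} \ \big|\ y^\delta \in Y,\, \norm{A \tilde{x} - y^\delta} \le \delta \bigr\} = 0.
\]
The first key step is to relate the admissible data sets for $\tilde{x}$ and $x$. Since $\mathcal{P}_{\ker(A)} \mathcal{U}_\theta(x) \in \ker(A)$, we have $A f_\theta(x) = A x$, so the constraint $\norm{A \tilde{x} - y^\delta} \le \delta$ is literally the constraint $\norm{A x - y^\delta} \le \delta$. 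Thus the two supremum sets are indexed by the same family of admissible $y^\delta$.

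Second, I would exploit Lipschitz continuity of $f_\theta$. Since $\mathcal{P}_{\ker(A)}$ is a bounded linear projection and $\mathcal{U}_\theta$ is Lipschitz with some constant $L_\theta$, the map $f_\theta = \id_X + \mathcal{P}_{\ker(A)} \circ \mathcal{U}_\theta$ is globally Lipschitz with constant $C_\theta \coloneqq 1 + \snorm{\mathcal{P}_{\ker(A)}} \cdot L_\theta$, and this constant depends only on $\theta$. Consequently, for any admissible $y^\delta$,
\[
\norm{\tilde{x} - f_\theta(\mathcal{B}_\delta(y^\delta))} = \norm{f_\theta(x) - f_\theta(\mathcal{B}_\delta(y^\delta))} \le C_\theta \norm{x - \mathcal{B}_\delta(y^\delta)}.
\]
Taking the supremum over $y^\delta$ with $\norm{A x - y^\delta} \le \delta$ and letting $\delta \to 0$, the right-hand side tends to zero by the assumption that $(\mathcal{B}_\delta)_{\delta>0}$ is a regularization method on $\mathcal{D}_0$ in the sense of Definition~\ref{def:reg}.

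There is no real obstacle here: both ingredients, data consistency of null-space networks and their Lipschitz continuity, are built into Definition~\ref{def:nsn}. The only point that merits a sentence of care is verifying that the admissible noisy data sets for $x$ and $\tilde{x}$ coincide, which is exactly the content of the data-consistency identity $A f_\theta = A$, and noting that $C_\theta$ is $\delta$-independent so the Lipschitz bound passes cleanly through the $\delta \to 0$ limit.
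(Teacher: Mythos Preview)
Your proof is correct and follows essentially the same approach as the paper: both arguments fix a preimage in $\mathcal{D}_0$, use $A f_\theta = A$ to identify the admissible data sets, and then apply the Lipschitz continuity of $f_\theta$ to reduce to the regularization property of $(\mathcal{B}_\delta)_{\delta>0}$. Your version is slightly more explicit about the Lipschitz constant and the coincidence of the data constraints, but the structure is the same.
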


\begin{proof}
Let $L$ be a Lipschitz constant of $f_\theta$.  
For any $x \in f_\theta(\mathcal{D}_0)$ and any $y^\delta \in Y$ we have $x = f_\theta(B A x)$ and
\begin{align*}
\| x - f_\theta  \circ \mathcal{B}_\delta(y^\delta) \|
&= \| f_\theta (B  A(x)) - f_\theta (\mathcal{B}_\delta(y^\delta)) \| \\
&\leq L \, \| (B  A)(x) - \mathcal{B}_\delta(y^\delta) \|.
\end{align*}
Taking the supremum over all $y^\delta$ with $\| y^\delta - y \| \le \delta$ and letting $\delta \to 0$ gives the defining property of a regularization method for~\eqref{eq:ip} on the class $f_\theta(\mathcal{D}_0)$.
\end{proof}

\subsection{Convergence rates}

In this subsection we derive quantitative error estimates for the regularizing null-space networks introduced above.  
Assume that $X$ and $Y$ are Hilbert spaces and that $B_\alpha = g_{\alpha}(A^* A) A^*$ is defined via a regularizing filter $(g_\alpha)_{\alpha > 0}$ satisfying the following conditions for some constants $\alpha_0, c_1, c_2 > 0$:
\begin{enumerate}[label=(R\arabic*)]
\item \label{it:r1}  $\forall \alpha > 0\; \forall \lambda \in [0, \|A^* A\|] \colon \lambda^\mu |1 - \lambda g_\alpha(\lambda)| \le c_1 \alpha^\mu$,
\item \label{it:r2}  $\forall \alpha > 0 \colon \| g_\alpha \|_\infty \le c_2/\alpha$.
\end{enumerate}
Let $\alpha \colon Y \times (0, \infty) \to (0, \infty)$ satisfy $\alpha^\star(\delta) \asymp (\delta/\rho)^{\frac{2}{2\mu + 1}}$ as $\delta \to 0$ (for $\mu, \rho > 0$), i.e., there exist constants $d_1,d_2>0$ such that
\[
d_1 \, (\delta/\rho)^{\frac{2}{2\mu + 1}}
\le 
\alpha^\star(\delta)
\le 
d_2 \, (\delta/\rho)^{\frac{2}{2\mu + 1}}.
\]
Define $\mathcal{B}_\delta = B_{\alpha(\delta, \edot)}$.

\begin{theorem}[Convergence rates for learned regularizations]\label{thm:rates}
Let $(f_\theta)_{\theta \in \Theta}$ be a null-space network and consider the regularizing network $(f_\theta \circ \mathcal{B}_\delta)_{\delta > 0}$ with $\mathcal{B}_\delta$ defined by a filter satisfying \ref{it:r1}–\ref{it:r2}.  
Then for $\mu,\rho>0$ and $\theta \in \Theta$ there exists $c>0$ such that
\begin{multline}\label{eq:rates}
\forall x \in \mathcal{D}_{\mu,\rho,\theta} 
\coloneqq    f_\theta(A^* A)^\mu(\overline{B_\rho(0)}) \colon \\
\sup\Bigl\{ \| \mathcal{B}_\delta(y^\delta) - x \|
\ \Big|\ 
y^\delta \in Y,\ 
\|A x - y^\delta\| \le \delta \Bigr\}
\le
c \, \delta^{\frac{2\mu}{2\mu + 1}} \rho^{\frac{1}{2\mu + 1}}.
\end{multline}
\end{theorem}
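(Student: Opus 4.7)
My strategy is to reduce the rate for the composed reconstruction $f_\theta \circ \mathcal{B}_\delta$ to the classical filter-based convergence rate on the preimage source-class $(A^*A)^\mu(\overline{B_\rho(0)})$, using the null-space architecture as the bridge between the two. Throughout, I will exploit that any $x \in \mathcal{D}_{\mu,\rho,\theta}$ has the form $x = f_\theta(\tilde x)$ with $\tilde x = (A^*A)^\mu w$ for some $w \in \overline{B_\rho(0)}$.

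The first step is the data-consistency observation underpinning the null-space construction. Since $f_\theta = \id_X + \mathcal{P}_{\ker(A)}\mathcal{U}_\theta$, the difference $x - \tilde x$ lies in $\ker(A)$, hence $A x = A\tilde x$. Consequently the hypothesis $\|A x - y^\delta\| \le \delta$ in the theorem is equivalent to $\|A\tilde x - y^\delta\| \le \delta$, and the problem reduces to controlling the classical reconstruction error $\|\mathcal{B}_\delta(y^\delta) - \tilde x\|$ uniformly over all such $y^\delta$.

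The second step is the standard spectral-filter rate argument. I decompose
\[
\|\mathcal{B}_\delta(y^\delta) - \tilde x\|
\le \|B_{\alpha^\star(\delta)}(y^\delta) - B_{\alpha^\star(\delta)}(A\tilde x)\|
+ \|B_{\alpha^\star(\delta)}(A\tilde x) - \tilde x\|.
\]
For the approximation term, $B_\alpha(A\tilde x) - \tilde x = (g_\alpha(A^*A)A^*A - \id_X)(A^*A)^\mu w$, and \ref{it:r1} combined with spectral calculus yields the bound $c_1 \alpha^\mu \rho$. For the propagated-noise term, \ref{it:r1} and \ref{it:r2} together give the standard estimate $\sup_\lambda \sqrt{\lambda}\,|g_\alpha(\lambda)| \le C\alpha^{-1/2}$, so the stability term is at most $C\delta/\sqrt{\alpha}$. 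With the parameter choice $\alpha^\star(\delta) \asymp (\delta/\rho)^{2/(2\mu+1)}$ the two terms balance and together produce a rate of the form $c_0\, \delta^{2\mu/(2\mu+1)} \rho^{1/(2\mu+1)}$.

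The third step is to transfer this estimate back through $f_\theta$. Since $\mathcal{U}_\theta$ is Lipschitz and $\mathcal{P}_{\ker(A)}$ is bounded, $f_\theta$ is Lipschitz with some constant $L$, whence
\[
\|f_\theta \circ \mathcal{B}_\delta(y^\delta) - x\|
= \|f_\theta(\mathcal{B}_\delta(y^\delta)) - f_\theta(\tilde x)\|
\le L \,\|\mathcal{B}_\delta(y^\delta) - \tilde x\|,
\]
and $c \coloneqq L c_0$ gives the announced rate. The genuinely new content is not the spectral estimate of step two (which is classical) but rather the observation in step one that the composite source-class $\mathcal{D}_{\mu,\rho,\theta}$ is exactly the right object: the null-space structure of $f_\theta$ keeps the measurement and hence the source condition intact at the preimage $\tilde x$, and the Lipschitz bound lets the preimage rate pass to $x$ without any order loss. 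The main obstacle, in my view, is choosing this correct source-class; once $\mathcal{D}_{\mu,\rho,\theta}$ is defined through $f_\theta$ as above, the remaining arithmetic is bookkeeping.
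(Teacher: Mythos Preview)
Your proof is correct and follows essentially the same route as the paper: identify the preimage $\tilde x=(A^*A)^\mu w$ (which the paper writes as $\mathcal{P}_{\ran(A^+)}x$, the same object since $\tilde x\in\ker(A)^\perp$ and $x-\tilde x\in\ker(A)$), use $A x=A\tilde x$ to invoke the classical filter rate on $\tilde x$, and then push the bound through the Lipschitz map $f_\theta$. The only cosmetic difference is that you spell out the bias--variance split from \ref{it:r1}--\ref{it:r2} explicitly, whereas the paper simply cites the order-optimal filter rate from \cite{engl1996regularization}.
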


\begin{proof}
We note that
\begin{align*}
\mathcal{P}_{\ran(A^+)} \mathcal{D}_{\mu,\rho,\theta}
&= (A^* A)^\mu(\overline{B_\rho(0)})
\\
\mathcal{P}_{\ran(A^+)} \mathcal{B}_\alpha
&= g_\alpha(A^* A) A^*.
\end{align*}
Let $x \in \mathcal{D}_{\mu,\rho,\theta}$ and $y^\delta \in Y$ satisfy $\|A x - y^\delta\| \le \delta$.  
By the construction, $g_\alpha(A^* A)A^*$ defines an order-optimal regularization on $(A^*A)^\mu(\overline{B_\rho(0)})$ (see \cite{engl1996regularization}).  
Hence there exists $C>0$ independent of $x$ and $y^\delta$ such that
\[
\| g_{\alpha^\star(\delta,y^\delta)}(A^*A)A^*(y^\delta)
- \mathcal{P}_{\ran(A^+)} x \|
\le
C \, \delta^{\frac{2\mu}{2\mu + 1}} \rho^{\frac{1}{2\mu + 1}}.
\]
Using the Lipschitz continuity of $f_\theta = \id_X + \mathcal{U}_\theta$ with a constant $L$ we obtain 
\begin{align*}
&\| \mathcal{B}_{\alpha^\star(\delta,y^\delta)}(y^\delta) - x \|
\\
&=
\| (\id_X + \mathcal{U}_\theta) g_{\alpha^\star(\delta,y^\delta)}(A^*A)A^*(y^\delta)
- (\id_X + \mathcal{U}_\theta) \mathcal{P}_{\ran(A^+)} x \| \\
&\le
L \, \| g_{\alpha^\star(\delta,y^\delta)}(A^*A)A^*(y^\delta)
- \mathcal{P}_{\ran(A^+)} x \| \\
&\le
L C \, \delta^{\frac{2\mu}{2\mu + 1}} \rho^{\frac{1}{2\mu + 1}}.
\end{align*}
Taking the supremum over $x$ and $y^\delta$ with $\|A x - y^\delta\| \le \delta$ yields \eqref{eq:rates}.
\end{proof}

Filters corresponding to truncated SVD and Landweber iteration satisfy the assumptions of Theorem~\ref{thm:rates}.  
For Tikhonov regularization, these conditions hold for $\mu \le 1$.  
In particular, under the source condition
\(
x \in   \mathcal{U}_\theta(\ran(A^+)),
\)
we obtain the rate
\(
\| \mathcal{B}_{\delta}(y^\delta) - x \|
= \mathcal{O}(\delta^{1/2}).
\)

\subsection{Numerical example}
\label{sec:example}

In this subsection we present a simple example illustrating the benefits of DC networks compared with their non–DC counterparts (standard residual networks).

\paragraph*{Problem formulation:}

We work in a discrete setting with $X = Y = \mathbb{R}^{64 \times 64}$ and consider the forward problem
\begin{equation}
\label{eq:ip-num}
A(x) = M K(x),
\end{equation}
where $M$ denotes a subsampling mask consisting of vertical stripes at indices $(4k+1,4k+2)$ for $k \in \set{0,1,2,3}$, and 
\(
(Kx)_{i_1, i_2} = \sum_{j_1=1}^{i_1} x_{j_1,i_2}
\)
is the discrete integration operator in the vertical direction.  
The inverse problem \eqref{eq:ip-num} therefore has a nontrivial null space and reflects the ill-posedness of numerical integration on the complement of the null space. In particular, we have $MK(x) = K (Mx)$, and the null space of $x \mapsto Mx$ is invariant under $K$. We use two-step network $f_\theta \circ  B_\alpha$ for reconstructing $x$ from noisy data $y=A(x) + z$.    

\begin{figure}[htb!]
\includegraphics[width=\textwidth]{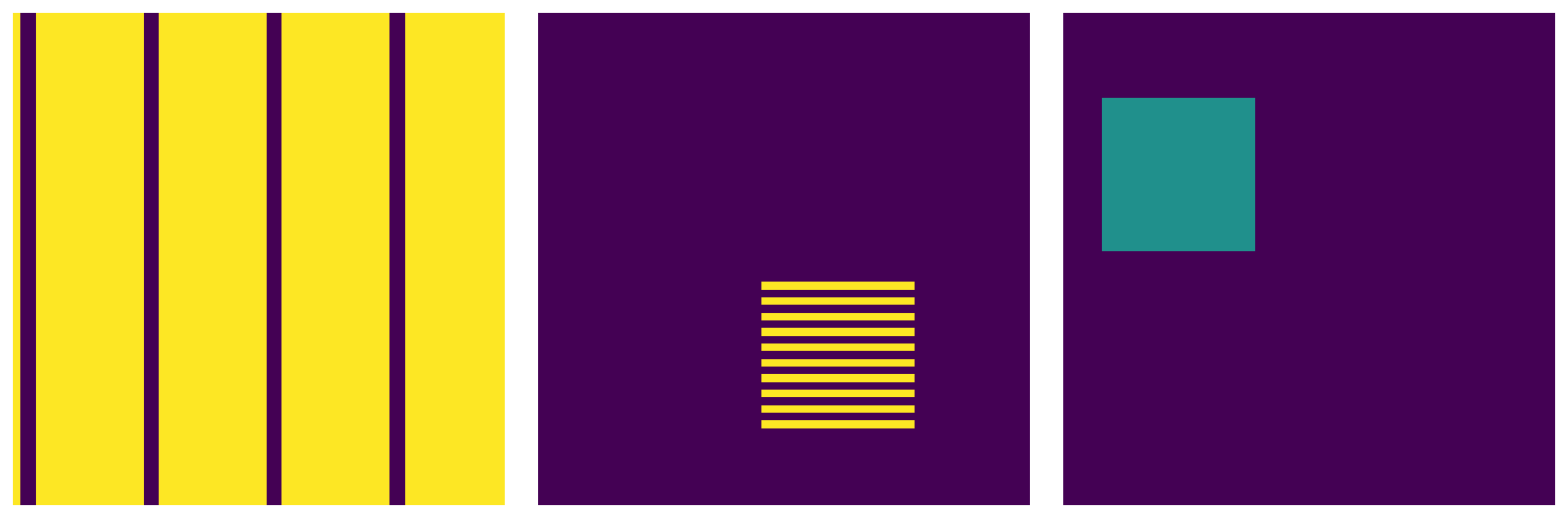}
\caption{Mask (left), ID  sample (middle), and OOD sample (right).}
\label{fig:mask_samples}
\end{figure}

All experiments are implemented in \texttt{PyTorch} using two-dimensional signals of size $64\times 64$.  
The DC of synthetic images containing a single $20\times 20$ square patch.
For ID  samples, the square is composed of horizontal stripes with alternating pixel values $(1,0,1,0,\dots)$, whereas for OOD evaluation the square has constant intensity $1/2$. The position of the square is chosen randomly, restricted to even pixel coordinates.   The observed data are given by $y = M K x + \varepsilon$, with additive Gaussian noise $\varepsilon$ of standard deviation $\sigma = 0.05$.  
Figure~\ref{fig:mask_samples} shows the mask (left), a sample from the training data (middle), and an OOD sample (right).

\paragraph*{Architecture and training:}

The two-step networks $f_\theta \circ B_\alpha$ use Tikhonov regularization $B_\alpha = (A^\top A + \alpha \id_X)^{-1} A^\top$  with $\alpha = 0.01$ as the initial reconstruction method and then apply either a standard residual network (ResNet) 
or a DC ResNet,
\begin{align}
\label{eq:num-res}
f_\theta(x) &= x + \mathcal{U}_\theta(x), \\
\label{eq:num-dc}
f_\theta(x) &= x + (I - M)\,\mathcal{U}_\theta(x),
\end{align}
where $(\mathcal{U}_\theta)_{\theta \in \Theta}$ is a learned correction operator and $(I - M)$ is the projection onto the null space of~$A$.  
For $(\mathcal{U}_\theta)_{\theta \in \Theta}$ we use a standard convolutional neural network (CNN) with $L = 5$ convolutional layers and channel width $C=6$, with ReLU activations between layers.  
The first layer maps $1 \to C$ channels, the last maps $C \to 1$, and all intermediate layers map $C \to C$.  
All convolutions use a $3 \times 3$ kernel with circular padding to avoid boundary artifacts. The overall ResNet architecture is shown in Figure~\ref{fig:smallresnet}.

\begin{figure}[htb!]
\centering
\begin{tikzpicture}[font=\small]
  \definecolor{ioColor}{RGB}{80,80,80}
  \definecolor{convColor}{RGB}{33,150,243}
  \definecolor{actColor}{RGB}{76,175,80}
  \definecolor{sumColor}{RGB}{255,111,0}
  \definecolor{arrowColor}{RGB}{55,71,79}
  \definecolor{textColor}{RGB}{30,30,30}
  \definecolor{groupColor}{RGB}{120,144,156}

  \tikzset{
    every node/.style={text=textColor},
    box/.style={draw, rounded corners=2pt, align=center, minimum width=40mm, minimum height=8mm, line width=0.8pt},
    io/.style={box, fill=ioColor!10, draw=ioColor},
    conv/.style={box, fill=convColor!12, draw=convColor},
    act/.style={box, fill=actColor!12, draw=actColor},
    sum/.style={circle, draw=sumColor, fill=sumColor!10, minimum size=6mm, line width=0.8pt},
  }

  \node[io]   (in)     at (0,  0.00) {Input $x$\\1 channel};
  \node[conv] (c1)     at (0, -1.30) {Conv $3\times3$\\$1 \to 6$\\circular pad};
  \node[act]  (r1)     at (0, -2.55) {ReLU};

  \node[conv] (cmid)   at (0, -3.85) {Conv $3\times3$\\$6 \to 6$\\circular pad};
  \node[act]  (rmid)   at (0, -5.05) {ReLU};
  \node        (dots)  at (0, -5.90) {$\vdots$};

  \draw[dashed, rounded corners=3pt, draw=groupColor, line width=0.8pt]
    (-2.6, -3.25) rectangle (2.6, -6.30);

  \draw[decorate, decoration={brace, mirror, amplitude=5pt}, draw=groupColor]
    (3.25, -3.25) -- node[right, text=groupColor, align=left, xshift=4pt]
    {repeat $(5 - 2)$ times\\each: Conv $3\times3$ ($6 \to 6$)\\circular pad + ReLU}
    (3.25, -6.30);

  \node[conv] (clast)  at (0, -7.10) {Conv $3\times3$\\$6 \to 1$\\circular pad};
  \node[sum]  (sum)    at (0, -8.25) {$+$};
  \node[io]   (out)    at (0, -9.35) {Output\\$x + \text{out}$};

  \draw[->, line width=0.9pt, draw=arrowColor] (in)   -- (c1);
  \draw[->, line width=0.9pt, draw=arrowColor] (c1)   -- (r1);
  \draw[->, line width=0.9pt, draw=arrowColor] (r1)   -- (cmid);
  \draw[->, line width=0.9pt, draw=arrowColor] (cmid) -- (rmid);
  \draw[->, line width=0.9pt, draw=arrowColor] (rmid) -- (dots);
  \draw[->, line width=0.9pt, draw=arrowColor] (dots) -- (clast);
  \draw[->, line width=0.9pt, draw=arrowColor] (clast) -- (sum);
  \draw[->, line width=0.9pt, draw=arrowColor] (sum)  -- (out);

  \draw[->, line width=1pt, draw=sumColor]
    (in.west) -- ++(-2.0,0) |- (sum.west);

\end{tikzpicture}
\caption{ResNet schematic used for the numerical simulation.}
\label{fig:smallresnet}
\end{figure}
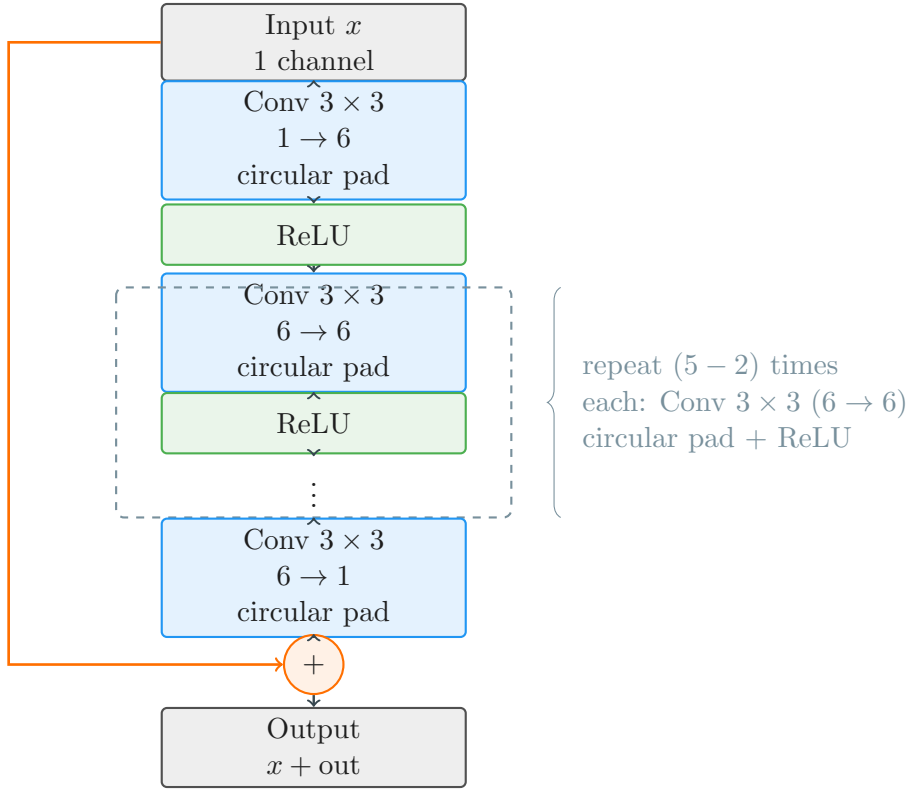

Both networks \eqref{eq:num-res} and \eqref{eq:num-dc} implement local image-to-image mappings.
They are trained for $100$ epochs using the Adam optimizer with learning rate $10^{-3}$.  
At each epoch a fresh pair $(x,y)$ is generated, where $x$ is an ID image and $y = Ax + z$ is the corresponding noisy measurement with Gaussian noise~$z$. While generating pairs on-the-fly during training is somewhat unusual, it is equivalent to pre-generating the 100 training pairs. In our setting, reuse is unnecessary due to the small number of epochs.
The training loss combines a data-fidelity term with a Frobenius-norm penalty on the network parameters,
\(
\mathcal{L}(\theta)
= \| f_\theta(B_\alpha y) - x \|_2^2
+ \lambda \|\theta\|_F^2
\),
where $\|\theta\|_F^2$ denotes the sum of squared weights across all convolutional layers and $\lambda = 10^{-4}$.  
The Frobenius penalty is implemented via the \texttt{weight\_decay} parameter of Adam to ensure uniform regularization across all convolutional weights.

\begin{figure}[htb!]
\includegraphics[width=\textwidth]{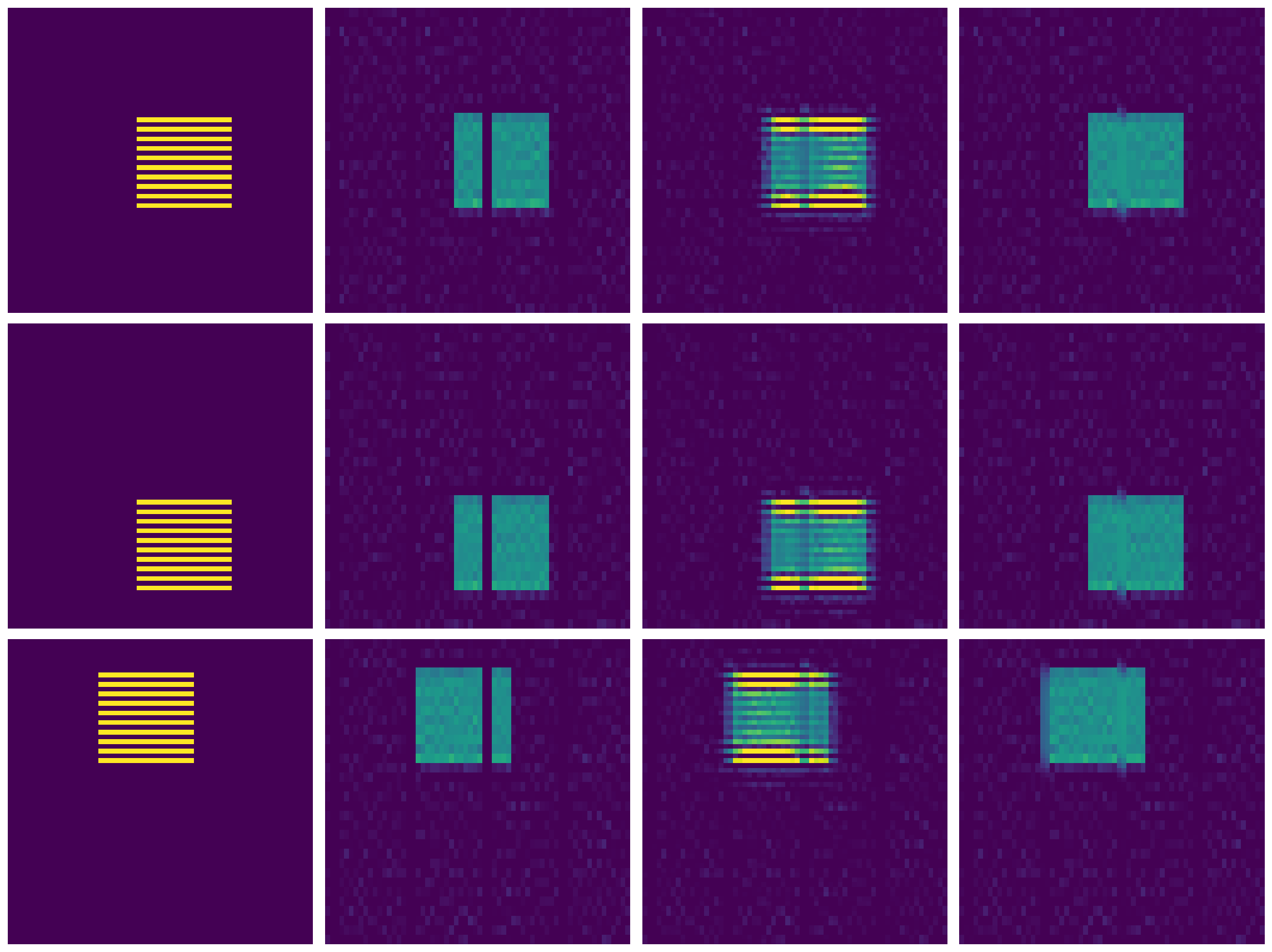}
\caption{ID evaluation for three random samples. From left to right: ground truth, Tikhonov regularizaton, residual network or a DC (null-space) network.}
\label{fig:result_ID}
\end{figure}

\begin{figure}[htb!]
\includegraphics[width=\textwidth]{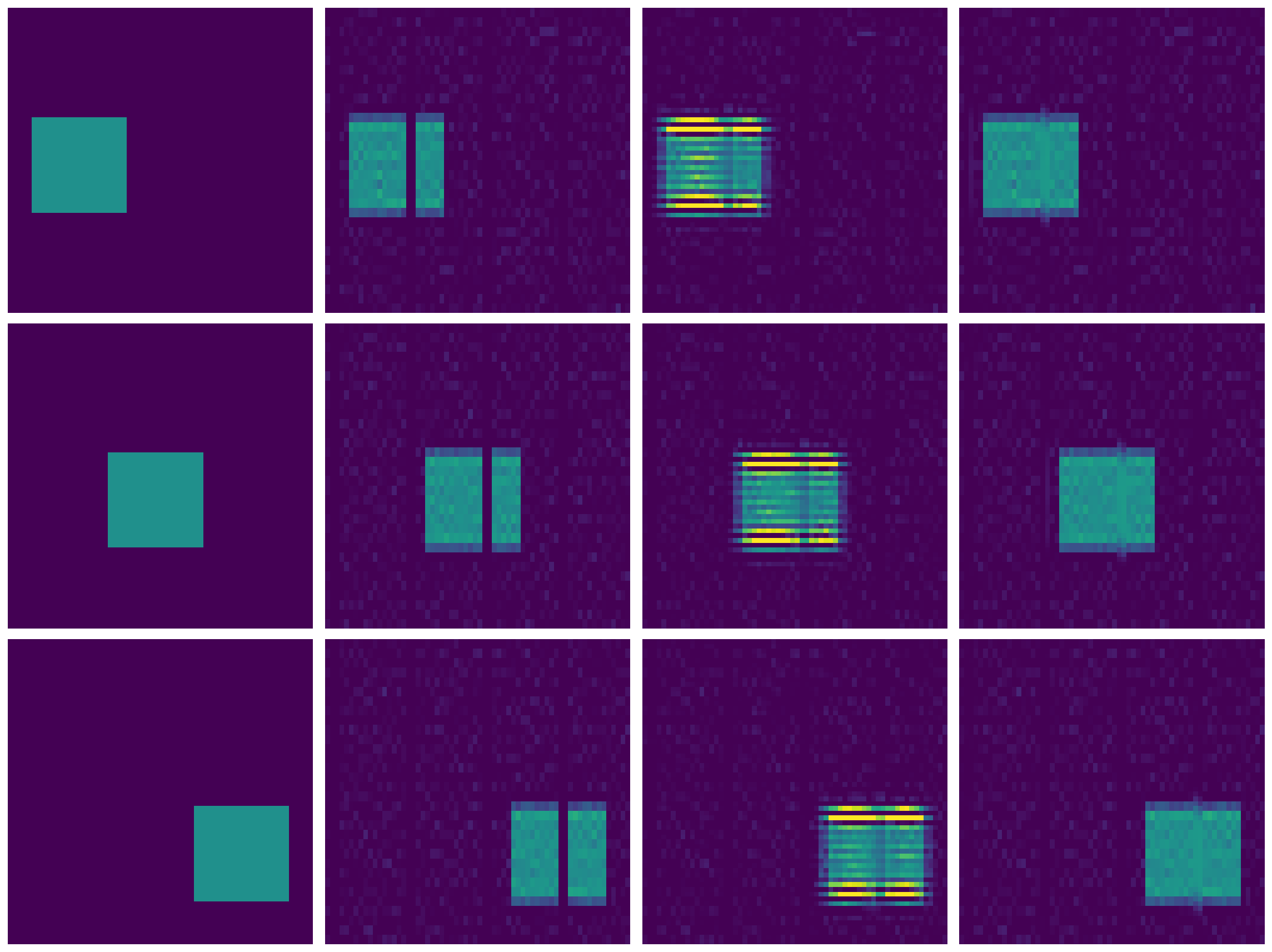}
\caption{OOD evaluation for three random samples. From left to right: ground truth, Tikhonov regularizaton, residual network or a DC (null-space) network.}
\label{fig:result_OOD}
\end{figure}

\paragraph*{Numerical results:}

The trained models are evaluated on both in-distribution (ID) striped squares and out-of-distribution (OOD) constant squares (Figure~\ref{fig:mask_samples}). Reconstructions are compared across Tikhonov, the standard ResNet, and the DC ResNet. Figures~\ref{fig:result_ID} and~\ref{fig:result_OOD} show representative reconstructions for both settings. Table~\ref{tab:combined_metrics_id_ood} reports peak signal-to-noise ratio (PSNR) and structural similarity (SSIM) for both ID and OOD samples. Higher PSNR/SSIM indicate better reconstruction quality, while lower MSE indicates better fidelity. With data range $L=1$, PSNR is computed as $\mathrm{PSNR}(x,y)=10\log_{10}\!\big(L^2/\mathrm{MSE}(x,y)\big)$.

\begin{table}[htbp]
\centering
\caption{Average reconstruction metrics over 20 samples (data range $= 1.0$). Best values in bold.}
\label{tab:combined_metrics_id_ood}
\setlength{\tabcolsep}{6pt}
\renewcommand{\arraystretch}{1.1}
\small
\begin{tabular}{l rr rr}
\toprule
Method  &
\makecell{PSNR\\(ID)} & \makecell{SSIM\\(ID)} &
\makecell{PSNR\\(OOD)} & \makecell{SSIM\\(OOD)} \\
\midrule 
Tikhonov & 15.517 & 0.3798 & 23.726 & 0.4341 \\
ResNet   & \textbf{19.218} & \textbf{0.5294} & 19.764 & \textbf{0.4802} \\
DC-Net   & 15.926 & 0.3710 & \textbf{27.336} & 0.4444 \\
\bottomrule
\end{tabular}
\end{table}

On ID data, the standard ResNet achieves the lowest error and highest perceptual quality, indicating strong reconstruction within the training distribution. The DC network improves over Tikhonov but does not match the plain ResNet on ID. On OOD data, the DC network attains the highest PSNR, reflecting robust measurement consistency and reduced hallucination under distribution shift; SSIM differences across methods are moderate. 

These results underline both the strengths and limitations of null-space networks. This motivates variants that also modify the range component~\cite{goppel2023data,goppel2025data}. While these approaches are theoretically attractive, concrete numerical advantages still need to be demonstrated. In any case, enforcing DC remains a central requirement.

\section{Summary and Outlook}
\label{sec:DC-conclusion}

Inverse problems are inherently ill-posed. Classical regularization methods address this by selecting, among all consistent candidates, the most regular solution that fits the data up to the noise level. In contrast, standard deep learning architectures such as U-Nets, ResNets, or encoder–decoder CNNs do not inherently enforce DC, and their outputs may violate the measurement equation. While such networks often produce visually superior results, they risk introducing artifacts that are inconsistent with the actual observations. To address this, recent research has focused on incorporating DC directly into network architectures. The key insight is that starting from a well-understood regularization method with proven convergence guarantees and augmenting it with a DC learning component preserves the theoretical stability of the classical approach while leveraging the expressive power of neural networks. Learned DC methods are as reliable as classical regularization, while offering superior reconstruction quality in practice.

Regularizing null-space networks are of the form $f_\theta \circ \mathcal{B}_\delta$, where $\mathcal{B}_\delta$ is a classical regularization operator and the network $f_\theta$ acts only in the null space of $A$. This structure guarantees exact preservation of the DC of $\mathcal{B}_\delta$. In practice, however, this may be too restrictive, especially in the presence of noise or model mismatch. Empirically, it is often advantageous to allow the network to make mild corrections also on the complement of the null space, thereby balancing reconstruction fidelity and stability. This leads naturally to data-proximal networks~\cite{goppel2023data,goppel2025data}, which generalize null-space networks by relaxing the strict DC constraint. Regularizing families of networks were introduced in~\cite{schwab2020big}, where it was shown that such constructions define convergent and stable regularization methods, admitting convergence rates analogous to classical schemes. The framework allows the integration of data-driven corrections while retaining theoretical guarantees.

In the finite-dimensional setting, several related concepts have been proposed: Deep decomposition learning~\cite{chen2019deep} generalizes the null-space principle by decomposing reconstructions into range and null-space components. Projectional networks~\cite{dittmer2019projectional} introduce iterative projection steps onto approximate DC sets. Soft or relaxed DC networks~\cite{huang2020data,kofler2019neural} replace strict DC with a data-fidelity term in the loss, yielding models that remain close to the measured data within a confidence region. In each variant, the strict null-space constraint is relaxed to permit limited modifications in the complement of the null space, improving robustness to noise and modeling errors.

Finally, the null-space idea has been extended to non-linear forward operators $A$, where the projection $\mathcal{P}_{\ker(A)}$ is replaced by local or learned approximations of the tangent null space. Examples include DC neural reconstruction schemes for non-linear tomography and MRI~\cite{boink2020data}. Furthermore, hybrid methods combining null-space learning with analytic reconstruction tools, such as shearlets or learned variational regularizers, have demonstrated state-of-the-art performance in ill-posed imaging problems~\cite{bubba2019learning}.

\section*{Acknowledgment}
This work was supported by the National Research Foundation of Korea(NRF) grant funded by the Korea government(MSIT) (RS-2024-00333393).


\end{document}